\newtheorem{Thm}{Theorem}
\newtheorem{thm}{Theorem}[section]
\newtheorem{prop}[thm]{Proposition}
\newtheorem*{defi}{Definition}
\newtheorem*{exa}{Example}
\DeclareMathAlphabet\mathbb{U}{fplmbb}{m}{n}
\newcommand{\HH}{\mathbb{H}}
\newcommand{\CC}{\mathbb{C}}
\newcommand{\RR}{\mathbb{R}}     
\newcommand{\ZZ}{\mathbb{Z}}     
\newcommand{\NN}{\mathbb{N}}
\newcommand{\defeq}{\mathrel{\mathop{\raisebox{1.05pt}{\scriptsize$:$}}}=}
\newcommand\opna{\operatorname}
\newcommand\mf{\mathfrak}
\newcommand\mass{\operatorname{mass}}
\begin{document}

\title{\Large The growth of the first non-Euclidean filling function of the quaternionic Heisenberg group}
\date{}
\author{\large Moritz Gruber}
\maketitle

\begin{abstract}\noindent\small
\textbf{Abstract. }The filling volume functions of the n-th quaternionic Heisenberg group grow, up to dimension n, as fast as the ones of the Euclidean space. We identify the growth rate of the filling volume function in dimension n+1, which is strictly faster than the growth rate of the $(n+1)$-dimensional filling volume function of the Euclidean space.
\end{abstract}

\begin{abstract}\noindent\small
\textbf{Keywords:} isoperimetric inequalities, filling functions, nilpotent Lie groups
\end{abstract}

%
%
\pagenumbering{arabic}

\section{Introduction}

In general, isoperimetric inequalities delimit the maximal volume needed to fill a boundary of a given volume. Different types of isoperimetric inequalities arise by specifying restrictions to the boundaries and fillings. An interesting class of such isoperimetric inequalities is formed by the \emph{filling volume functions}. These describe the difficulty to fill Lipschitz cycles by Lipschitz chains.
In \cite{Gruber1} we proved an Euclidean behaviour for the filling volume function in dimension $m+1$ of the $n^{th}$ quaternionic Heisenberg group $H^n_\HH$ for $m\in \{1,...,n-1\}$, i.e. it grows like $l^{\frac{m+1}{m}}$. The known results for the filling volume functions of the complex Heisenberg group $H^n_\CC$ (see \cite{Young1},\cite{YoungII}) suggest a growth like $l^\frac{n+2}{n}$ for the filling volume function of $H^n_\HH$ in dimension $n+1$. Our technique in  \cite{Gruber1} could only confirm the super-Euclidean behaviour without telling the exact growth. Now we are able to state it:

\begin{Thm}\label{Thm1}
Let $H^n_\HH$ be the $n^{th}$ quaternionic Heisenberg group, equipped with a left-invariant Riemannian metric.
Then:  
$$F^{n+1}_{H^n_\HH}(l) \sim l^\frac{n+2}{n}\ .$$
\end{Thm}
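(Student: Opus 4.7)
The plan is to prove $F^{n+1}_{H^n_\HH}(l) \sim l^{(n+2)/n}$ by separately establishing the upper bound $F^{n+1}_{H^n_\HH}(l) \preceq l^{(n+2)/n}$ and the matching lower bound $F^{n+1}_{H^n_\HH}(l) \succeq l^{(n+2)/n}$. Both use the grading of the Lie algebra of $H^n_\HH$ into a horizontal layer of dimension $4n$ (quaternionic) and a vertical centre of dimension $3$ (the imaginary quaternions), with the associated homogeneous dilations $\delta_t$ acting by $t$ on the horizontal layer and by $t^2$ on the centre. The critical exponent $(n+2)/n$ reflects the cost of filling an $(n+1)$-cycle whose optimal shape uses $n$ horizontal legs together with one vertical leg: the minimal filling must contain a second vertical leg in order to close up the non-integrability of the horizontal distribution, and this single extra vertical factor of $t^2$ is what pushes the exponent above the Euclidean value $(n+2)/(n+1)$.

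For the upper bound I would follow the scaling-and-filling method that Young uses in the complex case \cite{Young1,YoungII}. Given a cycle $z$ of mass $l$, first deform $z$ onto the $(n+1)$-skeleton of a left-invariant cell complex on $H^n_\HH$ of characteristic size $r$, using a Federer--Fleming-type push adapted to the Carnot grading. Then fill each $(n+1)$-cell explicitly, using the Euclidean behaviour of the filling functions of $H^n_\HH$ below dimension $n+1$ established in \cite{Gruber1} to handle the lower-dimensional subchains produced by the deformation. Tracking the horizontal and vertical weights of the resulting $(n+2)$-chain under the grading and optimising in $r$ yields a filling of mass bounded by $l^{(n+2)/n}$.

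For the lower bound I would combine a family of scaled cycles with a form-theoretic Stokes argument. The cycles are $z_L := \delta_L z_1$ for a fixed $(n+1)$-cycle $z_1$ sitting in a well-chosen submanifold that combines several horizontal quaternionic directions with one vertical direction. The certificate is a left-invariant $(n+1)$-form $\omega$ built from a Rumin-type complex for the quaternionic Heisenberg algebra, engineered so that $d\omega$ lies in a strictly heavier weight stratum than $\omega$; this makes $\|d\omega\|_\infty$ small in the Riemannian metric while $\int_{z_L}\omega$ stays comparable to $\mass(z_L)$. Stokes' theorem $\int_{z_L}\omega = \int_c d\omega \le \|d\omega\|_\infty\cdot\mass(c)$ then forces $\mass(c) \gtrsim l^{(n+2)/n}$ for every filling $c$ of $z_L$.

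The main obstacle, I expect, lies in the lower bound. The centre of $H^n_\HH$ is three-dimensional, coming from the three imaginary quaternions $\bd{i}, \bd{j}, \bd{k}$, so the vertical grading has three slots rather than the single slot of the complex Heisenberg case. Differentiation of an invariant form can therefore mix contributions among the three vertical directions without genuinely increasing its total weight, and one must choose $\omega$ so that $d\omega$ unambiguously leaves the weight stratum of $\omega$. This requires a careful analysis of the quaternionic Lie brackets $[\cdot,\cdot]\colon \HH^n\times\HH^n\to\mathrm{Im}\,\HH$, together with a compatible choice of cycle $z_L$ that pairs nontrivially with the form; verifying that no filling can cheat by deflecting mass into one of the other two vertical quaternionic directions is where the argument diverges most substantially from the complex case.
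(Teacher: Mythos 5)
Your overall strategy coincides with the paper's: the upper bound is exactly the one already available from \cite[Theorem 1]{Gruber1} (the paper simply quotes it rather than re-running a Federer--Fleming/scaling argument), and the lower bound is the Stokes-type certificate of Proposition \ref{Burillo} applied to a dilated family of cycles paired against a left-invariant $(n+1)$-form of weight $n+2$. You also correctly locate the real difficulty in the three-dimensional centre. But the proposal stops exactly where the paper's work begins: you never produce the form $\omega$, and its existence is the entire content of the argument --- in \cite{Doktorarbeit} it is shown that for the octonionic Heisenberg group no such form exists, so this existence is not something one can wave through. The paper's solution is explicit: embed $H^n_\CC$ into $H^n_\HH$ on the Lie algebra level by $h_i\mapsto h_i$, $k_i\mapsto k_i$, $K\mapsto K$, reuse Burillo's chain $b'$ as the source of the dilated cycles (so the boundary still scales like $t^n$), and take
$$\eta=\sum_{a\in A^{even}}\opna{sign}(a)\,v(a)\wedge K^*\;-\;\sum_{a\in A^{odd}}\opna{sign}(a)\,v(a)\wedge J^*\,,$$
where $a$ runs over $\{0,1\}^n$ sorted by the parity of $\sum_m a_m$, the $n$-form $v(a)$ wedges together $h_m^*$ or $i_m^*$ according to $a_m$, and $\opna{sign}(a)$ depends on $\sum_m a_m$ modulo $4$. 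Verifying that $\mathrm{d}\eta=0$ is a genuinely delicate sign-bookkeeping computation pairing each odd term with its even partners via the flips $\tau_r$; without an analogous construction your argument does not close.

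A further technical caveat: the mechanism you propose, a left-invariant $\omega$ with $\mathrm{d}\omega$ ``in a strictly heavier weight stratum'', cannot occur. Since $s_t^*$ commutes with $\mathrm{d}$, if $s_t^*\omega=t^s\omega$ then $s_t^*\mathrm{d}\omega=t^s\mathrm{d}\omega$ as well, so $\mathrm{d}\omega$ has exactly the same weight as $\omega$, and the only way for it to lie in a heavier stratum is to vanish. Your Rumin-flavoured relaxation therefore collapses back to the requirement $\mathrm{d}\omega=0$, i.e.\ precisely the closedness hypothesis of Proposition \ref{Burillo}, and producing a closed invariant form of weight $n+2$ that restricts nontrivially to the embedded $H^n_\CC$ is the step you still owe.
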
 

This theorem was part of the author's dissertation \cite{Doktorarbeit} at the Karlsruhe Institute of Technology.


\section{Filling volume functions \& Heisenberg groups}\label{S1}

\subsection{Filling volume functions}\label{SectionFF}

\emph{Filling volume functions} describe the difficulty to fill a \emph{Lipschitz cycle} of a given mass by a \emph{Lipschitz chain}.
In the following let $X$ be a metric space and $m \in \NN$. Further we denote by $\opna{vol}_m$ the $m$-dimensional Hausdorff-measure of $X$
and by $\Delta^m$ we denote the $m$-simplex equipped with an Euclidean metric.

A \emph{Lipschitz $m$-chain} $a$ in $X$ is a (finite) formal sum $a=\sum_j z_j\alpha_j$ of Lipschitz maps $\alpha_j:\Delta^m \to X$ with coefficients $z_j\in \ZZ$. 
The \emph{boundary} of a Lipschitz $m$-chain $a=\sum_j z_j\alpha_j$ is defined as the Lipschitz $(m-1)$-chain 
$$\partial a=\sum_j \big(z_j \sum_{i=0}^m (-1)^i \alpha_{j|\Delta^m_i} \big)$$
where $\Delta^m_i$ denotes the $i^{th}$ face of $\Delta^m$.
A Lipschitz $m$-chain $a$ with zero-boundary, i.e. $\partial a=0$, is called a \emph{Lipschitz $m$-cycle}. 
A \emph{filling} of a Lipschitz $m$-cycle $a$ is a Lipschitz $(m+1)$-chain $b$ with boundary $\partial b=a$. 
We define the $\emph{mass}$ of a Lipschitz $m$-chain $a$ as the total volume of its summands: 
$$\mass (a)\defeq \sum_j z_j \opna{vol}_m(\alpha_j)\ .$$
In the case that $X$ is a Riemannian manifold, the volume of such a summand is given by $\opna{vol}_m(\alpha_j)=\int_{\Delta^m} J_{\alpha_j} \opna d\! \lambda$, where $ \opna d\! \lambda$ denotes the $m$-dimensional Lebesgue-measure and $J_{\alpha_j}$ is the jacobian of $\alpha_j$. This is well defined, as Lipschitz maps are, by Rademacher's Theorem, almost everywhere differentiable.  

Given an $m$-cycle, one is interested in the filling with the smallest mass. In the next step one can vary the cycle and examine how large the ratio between the mass of the optimal filling and the mass of the cycle can get. This leads to a family of invariants of the space $X$, the \emph{filling volume functions}:

\begin{defi}
Let $n \in \mathbb N$ and let $X$ be an $n$-connected metric space.
For $m\le n$ the \emph{$(m+1)$-dimensional filling volume function} of $X$ is given by
$$F^{m+1}_X(l)=\sup_a \inf_{b} \mass(b) \qquad \forall l \in \RR^+,$$
where the infimum is taken over all $(m+1)$-chains $b$ with $\partial b=a$ and the supremum is taken over all  $m$-cycles $a$ with $\mass(a)\le l$.
\end{defi}

As we are mostly interested in the large scale geometry of the space $X$, the exact description of the filling volume functions is of less importance to us. Indeed we only look at the asymptotic behaviour of the functions. We do this by the following equivalence relation, which makes the growth rate of the filling volume functions a quasi-isometry invariant.

\begin{defi}
Let $f,g:\mathbb R^+ \to \mathbb R^+$ be functions. Then we write $f\preccurlyeq g$ if there is a constant $C>0$ with 
$$f(l) \le Cg(Cl)+Cl+C \quad \forall l \in \mathbb R^+.$$
If $f\preccurlyeq g$ and $g\preccurlyeq f$ we write $f\sim g$. This defines an equivalence relation.
\end{defi}
We read the notation $f \preccurlyeq g$ as ``\emph{$f$ is bounded from above by $g$}'' respectively ``\emph{$g$ is bounded from below by $f$}'' according whether we are more interested in $f$ or $g$.

\begin{prop}[{see for example \cite[Lemma 1]{YoungH}}]
Let $X$ and $Y$ be $n$-connected Riemannian manifolds. Then:
$$X \text{ quasi-isometric to }Y \ \Rightarrow\  F^{m+1}_X \sim F^{m+1}_Y \quad \forall m \le n.$$ 
\end{prop}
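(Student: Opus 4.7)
The plan is to bypass the fact that a quasi-isometry $f \colon X \to Y$ is typically not continuous, so that Lipschitz chains cannot be pushed along $f$ directly. The strategy is to replace both $X$ and $Y$ by simplicial models on which $f$ acts combinatorially, to show that the Lipschitz and simplicial filling functions are $\sim$-equivalent on each side, and finally to transport simplicial fillings along the combinatorial version of $f$.

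First I would fix $\varepsilon>0$ smaller than the injectivity radii of $X$ and $Y$, choose maximal $\varepsilon$-separated nets $N_X\subset X$ and $N_Y\subset Y$, and build the associated Rips complexes $K_X$ and $K_Y$ (endowed with the standard simplicial metric). Because $X$ and $Y$ are $n$-connected Riemannian manifolds with uniformly bounded geometry on small scales, $K_X$ and $K_Y$ are $n$-connected and quasi-isometric to $X$ and $Y$ respectively. The map $f$ then induces, after composition with a nearest-point assignment $f(N_X)\to N_Y$, a map $\varphi\colon K_X\to K_Y$ which (after passing to a subdivision) can be chosen simplicial and which distorts the simplex count of a chain only by a bounded multiplicative constant.

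Next I would establish $F^{m+1}_X\sim F^{m+1}_{K_X}$ and likewise for $Y$. The upper bound uses a Federer--Fleming-type deformation that pushes a given Lipschitz $m$-cycle $a$ onto the $m$-skeleton of $K_X$, producing a simplicial $m$-cycle of comparable mass; a simplicial filling is then reinterpreted as a Lipschitz filling by exploiting $n$-connectedness and uniform local geometry to cone over the successive skeleta with uniformly bounded mass per simplex. The reverse direction is analogous, approximating Lipschitz chains in $X$ by simplicial chains on $K_X$ in both dimensions $m$ and $m+1$. Once these equivalences are in place, the maps $\varphi$ and its coarse inverse transport simplicial $m$-cycles and their fillings between $K_X$ and $K_Y$ with only bounded multiplicative distortion, giving $F^{m+1}_{K_X}\sim F^{m+1}_{K_Y}$, and concatenating the three equivalences yields the claim.

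The main obstacle is the discretization step: the combinatorial transfer along $\varphi$ is essentially formal, but the equivalence of Lipschitz and simplicial filling functions hinges on a careful Federer--Fleming argument controlling mass on both the cycle and the filling side. One has to verify that the deformation onto the $m$-skeleton costs only a bounded multiplicative factor in mass (plus a linear-in-$l$ term absorbed by $\preccurlyeq$), and that the reconstruction of a Lipschitz $(m+1)$-chain from a simplicial one by iterated coning stays within bounded mass per simplex; both of these rely crucially on the uniform local geometry of the manifolds and on the hypothesis $m\le n$, which is exactly what makes the filling functions in this range quasi-isometry invariants.
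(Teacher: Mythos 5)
The paper does not prove this proposition; it is quoted from the literature (Young's Lemma 1), and your outline is essentially the argument given there: discretize both manifolds via nets and Rips-type complexes, show the Lipschitz and simplicial filling functions agree up to $\sim$ by a Federer--Fleming deformation in each direction, and transport simplicial cycles and fillings along the combinatorial map induced by the quasi-isometry. So the approach is the right one and matches the cited source. One caveat worth making explicit: your argument (correctly) leans on uniform local geometry --- a positive lower bound on the injectivity radius, uniformly bounded volume and connectivity at small scales --- at every step where you invoke ``bounded mass per simplex'' or the $n$-connectedness of the Rips complex. This is not a consequence of $X$ and $Y$ merely being $n$-connected Riemannian manifolds, and the proposition is in fact false without some such bounded-geometry hypothesis (one can decorate $\mathbb{R}^2$ with small-diameter, large-area bubbles without changing its quasi-isometry type but changing its filling function). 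The hypothesis is present in the cited lemma and is harmless here, since the paper only applies the proposition to Lie groups with left-invariant metrics, which are homogeneous; but your write-up should state it rather than treat it as automatic.
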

Any two left-invariant Riemannian metrics on a Lie group are Lipschitz-equivalent. So, by the above proposition, the behaviour of the filling volume functions of a Lie group equipped with a left-invariant Riemannian metric does not depend on the choice of this metric.

Let's look at the example of the filling volume functions of the $n$-dimensional Euclidean space. They were first computed by Federer and Fleming in \cite{FF60}.

\begin{exa}
The filling volume functions of the Euclidean space $\mathbb{E}^n$ are
$$F^{m+1}_{\mathbb{E}^n}(l) \sim l^\frac{m+1}{m} \quad \text{for } \ m \le n-1.$$ 
\end{exa}
This enables us to use the terms \emph{Euclidean, sub-Euclidean} and \emph{super-Euclidean filling volume function} for filling volume functions with the same, strictly slower respectively strictly faster growth rate
than $l^\frac{m+1}{m}$.

The following theorem generalises the Euclidean case to spaces with non-positive curvature. For a proof see \cite{Wenger08}.

\begin{thm}\label{nonpos}
The filling volume functions of an $n$-dimensional Hadamard space $X$ are 
$$F^{m+1}_X(l) \preccurlyeq l^\frac{m+1}{m} \quad \text{for } \ m\le n-1.$$ 
\end{thm}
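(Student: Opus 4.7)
The plan is to adapt the Federer--Fleming deformation argument to Hadamard spaces by replacing rectilinear subdivision with the geodesic cone, following Wenger. The central tool is the \emph{cone inequality}: if $a$ is a Lipschitz $m$-chain supported in a metric ball $B(p,r)\subset X$, the geodesic cone $C_p(a)$ — whose simplices are the $(m+1)$-simplices $(t,x)\mapsto \gamma_{p,\alpha_j(x)}(t)$ with $\gamma_{p,q}$ the unique geodesic from $p$ to $q$ — satisfies $\partial C_p(a)=a-a_0$ for a degenerate chain $a_0$ and
$$\mass\bigl(C_p(a)\bigr) \;\le\; \frac{r}{m+1}\,\mass(a).$$
This is the CAT(0) analogue of the Euclidean cone computation: by Reshetnyak majorisation (or by the flat comparison for two geodesics emanating from a common point), the Jacobian of the cone map at height $t$ is pointwise dominated by $t^m$ times the Jacobian of the base simplex, and integrating over $t\in[0,1]$ gives the bound.

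First I would prove the cone inequality by verifying the Jacobian estimate at each point where $\alpha_j$ is differentiable (available almost everywhere by Rademacher), then integrating. This already fills \emph{small-diameter} cycles efficiently: a cycle of mass $l$ supported in a ball of radius $d$ admits a filling of mass $\lesssim d\,l$, which matches the target $l^{(m+1)/m}$ exactly when $d\lesssim l^{1/m}$. So the whole difficulty is to reduce the general case to the small-diameter case.

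To carry out the reduction, fix the scale $r\sim l^{1/m}$, take a maximal $r$-separated net $\{p_k\}$ in the support of $a$, and slice $a$ by the distance functions $x\mapsto d(x,p_k)$. A co-area (slicing) inequality for Lipschitz chains guarantees that for almost every $s\in [r/2,r]$ the slices $a\llcorner\partial B(p_k,s)$ are $(m-1)$-cycles whose total mass, summed over $k$, is bounded by $\mass(a)/r$; choose radii $s_k$ for which this holds. Inductively on $m$ (the base case $m=0$ being filled by geodesic segments using $F^1\preccurlyeq l$), each slice is filled by an $m$-chain $c_k$ via the theorem in dimension $m-1$; adding these caps to $a\llcorner B(p_k,s_k)$ produces small-diameter $m$-cycles supported in $B(p_k,r)$, to which the cone inequality applies. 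Summing the cone costs yields $\lesssim r\cdot l = l^{(m+1)/m}$, and a careful bookkeeping shows that the inductive cap cost contributes the same order.

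The main obstacle is two-fold. First, the slicing and co-area arguments are classical for Ambrosio--Kirchheim integral currents but need some care at the level of Lipschitz chains, either by a direct triangulation-of-sublevel-sets argument or by approximating each $\alpha_j$ by a smooth map and slicing transversally to the distance sphere. Second, the induction on $m$ must be balanced so that the $(m-1)$-dimensional cap fillings — whose mass by induction is of order $(\mass(a)/r)^{m/(m-1)}$ summed over the $\sim \mass(a)/r^m$ pieces — do not dominate the cone cost; this forces precisely the choice $r\sim l^{1/m}$. The $n$-dimensionality of $X$ enters only to ensure that $(m+1)$-dimensional chains with $m+1\le n$ can be realised inside $X$ and that Hausdorff measure correctly computes their mass.
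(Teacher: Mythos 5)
The paper does not actually prove this theorem: it is quoted from \cite{Wenger08}, and your outline is essentially the argument of that reference (and of Wenger's work on isoperimetric inequalities of Euclidean type in metric spaces admitting cone-type inequalities): a CAT(0) cone inequality plus a Federer--Fleming/Gromov decomposition into pieces of diameter comparable to $(\mass)^{1/m}$, with an induction on dimension. Your cone inequality is correct as stated: in a Hadamard space the map $x\mapsto\gamma_{p,x}(t)$ is $t$-Lipschitz by convexity of the distance function, so the slice of the cone at height $t$ has mass at most $t^m\mass(a)$ while the radial speed is at most $r$, giving $\mass(C_p(a))\le \frac{r}{m+1}\mass(a)$.

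Two points in your reduction need more than ``careful bookkeeping''. First, a single maximal $r$-net plus one round of slicing does not decompose $a$ into small-diameter cycles: after cutting out $a\llcorner B(p_k,s_k)$ and capping, the remainder is a cycle whose support need not be covered by the chosen balls and whose mass has not visibly dropped by a definite factor, so the process does not obviously terminate. The standard repair is an iterative decomposition lemma: a covering/density argument produces a point where $a$ has mass at least $\varepsilon r^m$ in a ball of radius $r$, one splits off that single ``round'' piece at controlled cost, and iterates; only then is the number of pieces genuinely $\lesssim\mass(a)/r^m$ and your accounting valid. Second, the inductive fillings $c_k$ of the slices are supplied by $F^m_X$ with no control on where they live; to use them as caps of small-diameter cycles you must retract them into $\overline{B(p_k,s_k)}$, which in a Hadamard space is done at no cost in mass by the $1$-Lipschitz nearest-point projection onto the convex closed ball --- this is a genuinely CAT(0) ingredient and should be said. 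Finally, as you yourself flag, the slicing/co-area step is cleanest for Ambrosio--Kirchheim integral currents; one either works in that category throughout and then invokes the (nontrivial but known) comparison between current filling functions and Lipschitz-chain filling functions, or approximates. None of this is a fatal obstruction, but it is where the actual proof lives, and the one-shot net construction as literally written would not close.
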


We are interested in this, as the fact that a Riemannian manifold with non-positive curvature has Euclidean or sub-Euclidean filling volume functions in all dimensions yields a sufficient criterion for the existence positive curvature: Let $M$ be a Riemannian manifold with a super-Euclidean filling volume function in some dimension, then it can't be of non-positive curvature.

%
%

\subsection{Heisenberg groups}\label{HG} 

The \emph{complex Heisenberg group} $H^n_\CC$ is a higher dimensional analogue of the classical $3$-dimensional Heisenberg group
$$\boldsymbol{H\hspace{-0.5mm}eis}=\left\{ \begin{pmatrix} 1&x&z \\ 0&1&y \\  0&0&1 \end{pmatrix} \mid x,y,z \in \RR \right\} \le \opna{GL}_3(\RR).$$
Equipped with a left invariant Riemannian metric it appears as a horosphere in the complex-hyperbolic space $\opna{SU}(n,1)/\opna S(\opna U(n) \times \opna U(1))$. As an abstract Lie group it can be defined as follows:

\begin{defi}
The \emph{$n^{th}$ complex Heisenberg group} $H^n_\CC$ is 
the unique simply connected Lie group with Lie algebra $\mathfrak h^n_\CC$ generated by 
$$B_\CC\defeq \{h_1,...,h_n,k_1,...,k_n,K \}$$
and with
$$[k_m,h_m]=K $$
for all $m\in\{1,...,n\}$ and with all other brackets of generators equal to zero.
This is a $2$-step nilpotent Lie group of dimension $2n+1$ with Lie algebra $\mathfrak h^n_\CC=V_1\oplus V_2$, where $V_1=\opna{span}\{h_1,...,h_n,k_1,...,k_n\}\cong\CC^n$ and $V_2=\opna{span}\{K\}\cong\opna{Im}\CC\cong \RR$.
\end{defi}

The group we examine in this paper is the \emph{quaternionic Heisenberg group} $H^n_\HH$. It appears similar to the complex Heisenberg group as a horosphere, now in the quaternionic-hyperbolic space $\opna{Sp}(n,1)/(\opna{Sp}(n)\times \opna{Sp}(1))$.

\begin{defi}
The  \emph{$n^{th}$ quaternionic Heisenberg group} $H^n_\HH$ is 
the unique simply connected Lie group with Lie algebra $\mathfrak h^n_\HH$ generated by 
$$B_\HH\defeq \{h_1,...,h_n,i_1,...,i_n,j_1,...,j_n,k_1,...,k_n,I,J,K \}$$ 
and with
$$   [k_m,j_m]=I=[i_m,h_m], \ [i_m,k_m]=J=[j_m,h_m],\ [j_m,i_m]=K=[k_m,h_m]$$
for all $m\in\{1,...,n\}$ and with all other brackets of generators equal to zero.
This is a $2$-step nilpotent Lie group of dimension $4n+3$  with Lie algebra $\mathfrak h^n_\HH=V_1\oplus V_2$ where $V_1=\opna{span}\{h_1,...,h_n,i_1,...,i_n,j_1,...,j_n,k_1,...,k_n\}\cong\HH^n$ and $V_2=\opna{span}\{I,J,K\}\cong\opna{Im}\HH$, where $\HH$ denotes the Hamilton quaternions.
\end{defi}

These nilpotent Lie groups are representatives of a large class of nicely behaving Lie groups:

\begin{defi}
A simply connected, $d$-step nilpotent Lie group $G$ with Lie algebra $\mf g$ is \emph{stratified}, if there is a decomposition 
$\mf g= V_1 \oplus ... \oplus V_d$
of the Lie algebra with $[V_1,V_j]=V_{j+1}$.
\end{defi}

Let $G$ be a stratified nilpotent Lie group. Then for every $t>0$ there is an automorphism $\widehat s_t : \mf g \to \mf g$ which maps $v\in V_j$ to $t^jv_j$.
We denote the corresponding automorphism $\exp \circ \widehat s_t$ of the Lie group  by $s_t$.

\subsection{A technical criterion for lower bounds}

For our proof of the lower bound on the $(n+1)$-dimensional filling volume function of the quaternionic Heisenberg group $H^n_\HH$, we will apply a technique based on Stokes’ theorem.

\begin{prop}[{see \cite[Proposition 1.2] {Burillo}}]\label{Burillo} 
Let $G$ be a stratified nilpotent Lie group equipped with a left-invariant Riemannian metric and let $m \in \NN$. If there exists a Lipschitz $(m+1)$-chain $b$ and a closed $G$-invariant $(m+1)$-form $\eta$ in G and constants $C,r,s >0$ such that
\begin{enumerate}[]
\item  1) $\mass(s_t(\partial b))\le Ct^r$\ ,
\item  2) $\int_b \eta >0$\ ,
\item  3) $s_t^*\eta = t^s\eta$\ ,
\end{enumerate}
then holds \ $F^{m+1}_G(l) \succcurlyeq l^\frac{s}{r}$.
\end{prop}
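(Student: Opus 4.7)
The plan is to combine the scaling automorphism $s_t$ with Stokes' theorem, using the closed invariant form $\eta$ as a ``calibration'' that detects mass.

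First, I would fix $l > 0$ and choose $t = (l/C)^{1/r}$, so that by hypothesis 1) the Lipschitz $m$-cycle $a_t \defeq s_t(\partial b) = \partial(s_t(b))$ has mass at most $l$. The strategy is to show that every Lipschitz $(m+1)$-chain $c$ with $\partial c = a_t$ has mass bounded below by a constant multiple of $t^s$, which equals a constant multiple of $l^{s/r}$. Taking $a_t$ as the candidate cycle in the definition of $F^{m+1}_G$ then yields the desired lower bound $F^{m+1}_G(l) \succcurlyeq l^{s/r}$.

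To bound $\mass(c)$ from below, I would integrate $\eta$ over $c$. Given any Lipschitz filling $c$, the chain $c - s_t(b)$ is a Lipschitz $(m+1)$-cycle in $G$. Since $G$ is simply connected and nilpotent, it is diffeomorphic to a Euclidean space and in particular contractible, so its singular homology vanishes in positive degrees; hence $c - s_t(b)$ is a boundary of some Lipschitz $(m+2)$-chain. Applying Stokes' theorem to this chain against the closed form $\eta$ gives
\[
\int_c \eta \;=\; \int_{s_t(b)} \eta \;=\; \int_b s_t^* \eta \;=\; t^s \int_b \eta,
\]
where the last two equalities use the change-of-variables formula for Lipschitz chains (valid by Rademacher) and hypothesis 3). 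By hypothesis 2) the constant $A \defeq \int_b \eta$ is strictly positive. On the other hand, since $\eta$ is left-invariant, its pointwise comass $\|\eta\|_\infty$ is finite and constant on $G$, so the elementary estimate gives $\int_c \eta \le \|\eta\|_\infty \cdot \mass(c)$. Combining the two,
\[
\mass(c) \;\ge\; \tfrac{A}{\|\eta\|_\infty}\, t^s \;=\; \tfrac{A}{\|\eta\|_\infty\, C^{s/r}}\, l^{s/r},
\]
as wanted.

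The main obstacle, and the point most in need of care, is the Stokes step for Lipschitz objects: one has to justify that $\int_c \eta = \int_{s_t(b)} \eta$ for \emph{any} Lipschitz filling $c$, not only smooth ones. I would handle this by appealing to the standard extension of integration of smooth forms to Lipschitz chains via Rademacher's theorem (which is already implicit in the definition of $\mass$ used in this paper), together with the fact that in the contractible manifold $G$ every Lipschitz cycle bounds a Lipschitz chain to which the usual Stokes' theorem applies after smoothing. A secondary point to check is the comass bound $\int_c \eta \le \|\eta\|_\infty \cdot \mass(c)$, but this is routine once $\int_c \eta$ is defined summand-wise via the Jacobian as in Section~\ref{SectionFF}. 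Everything else, the choice of $t$ and the arithmetic of exponents, is bookkeeping.
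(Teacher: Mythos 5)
The paper does not prove this proposition itself---it is quoted from Burillo's work---and your calibration argument (rescale by $s_t$ with $t=(l/C)^{1/r}$, use contractibility of $G$ plus Stokes to get $\int_c\eta=\int_{s_t(b)}\eta=t^s\int_b\eta$, then the comass bound $\bigl|\int_c\eta\bigr|\le\|\eta\|_\infty\,\mass(c)$) is precisely the standard proof given in that reference. It is correct, and the one point you flag as delicate, Stokes' theorem for Lipschitz chains against a smooth form, is indeed the only technical step and is handled exactly as you indicate.
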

Burillo used this criterion to compute lower bounds on the filling volume functions of the complex Heisenberg group $H^n_\CC$. We will use this criterion as well as the property that $H^n_\HH$ contains $H^n_\CC$.

The change-over from the complex to the quaternionic Heisenberg group complicates the construction of the form $\eta$, as there are more non-trivial brackets and therefore $G$-invariant $(n+1)$-forms with $s_t^*\eta = t^{n+2}\eta$ get very rare. In \cite{Doktorarbeit} we have even seen, that in the case of the octonionic Heisenberg group there are no such forms.

%
%

\section{The proof of Theorem \ref{Thm1}}\label{SectionPThm1}

In this section we compute the $(n+1)$-dimensional filling volume function of the quaternionic Heisenberg group $H^n_\HH$. As $H^n_\HH$ fulfils the conditions of \cite[Theorem 1]{Gruber1}, we already have the super-Euclidean upper bound: 
$$F^{n+1}_{H^n_\HH}(l) \preccurlyeq l^\frac{n+2}{n}\ .$$
Thus it remains to prove the corresponding lower bound.

\begin{prop}
Let $H^n_\HH$ be the $n^{th}$ quaternionic Heisenberg group. Then:
$$F^{n+1}_{H^n_\HH}(l) \succcurlyeq l^\frac{n+2}{n}\ .$$
\end{prop}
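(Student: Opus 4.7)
The plan is to apply the criterion of Proposition \ref{Burillo} with exponents $r=n$ and $s=n+2$, so that $F^{n+1}_{H^n_\HH}(l)\succcurlyeq l^{s/r}=l^{(n+2)/n}$. I must exhibit a Lipschitz $(n+1)$-chain $b$ with $\mass(s_t(\partial b))\le Ct^n$ and a closed left-invariant $(n+1)$-form $\eta$ on $H^n_\HH$ satisfying $s_t^*\eta=t^{n+2}\eta$ and $\int_b\eta>0$.

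For the chain $b$, I use that $H^n_\HH$ contains $H^n_\CC$ as the Lie subgroup whose Lie algebra is generated by $\{h_1,\dots,h_n,k_1,\dots,k_n,K\}$ (with the only nontrivial bracket $[k_m,h_m]=K$). I take for $b$ the $(n+1)$-chain used by Burillo (following Young) to prove the analogous lower bound on $F^{n+1}_{H^n_\CC}$, now regarded inside $H^n_\HH$. Its boundary is a horizontal $n$-cycle sitting in $V_1$, so $s_t$ stretches $\partial b$ purely in horizontal directions and $\mass(s_t(\partial b))\le Ct^n$, giving $r=n$.

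The construction of $\eta$ is the heart of the proof. The scaling $s_t^*\eta=t^{n+2}\eta$ combined with $\deg\eta=n+1$ forces each summand of $\eta$ to contain exactly $n$ factors from $V_1^*$ and one from $V_2^*$, so necessarily
\[
\eta=\theta_I\wedge I^*+\theta_J\wedge J^*+\theta_K\wedge K^*
\]
with horizontal $n$-forms $\theta_I,\theta_J,\theta_K$. Since the $1$-forms dual to $V_1$ are Maurer-Cartan-closed, closedness of $\eta$ reduces to the purely algebraic identity
\[
\theta_I\wedge\omega_I+\theta_J\wedge\omega_J+\theta_K\wedge\omega_K=0 \quad\text{in}\quad \Lambda^{n+2}V_1^*,
\]
where $\omega_\sigma:=d\sigma^*$; explicitly $\omega_K=\sum_m(h_m^*\wedge k_m^*+i_m^*\wedge j_m^*)$ and cyclically for $I,J$. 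The natural starting point is $\theta_K^{(0)}:=h_1^*\wedge\cdots\wedge h_n^*$, which works in the complex case because there $\omega_K$ consists only of the $h_m^*\wedge k_m^*$ piece that is killed by the repeated $h_m^*$; but in $H^n_\HH$ the extra summand $i_m^*\wedge j_m^*$ survives and produces the obstruction $\theta_K^{(0)}\wedge\omega_K=\sum_m h_1^*\wedge\cdots\wedge h_n^*\wedge i_m^*\wedge j_m^*\neq 0$.

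The main obstacle is to cancel this obstruction by choosing $\theta_I,\theta_J$ (and possibly correcting $\theta_K$) from terms involving factors $i_m^*$ or $j_m^*$, so that they restrict to zero on the $H^n_\CC$-subgroup containing $b$; then the positivity reduces to $\int_b\eta=\int_b\theta_K^{(0)}\wedge K^*>0$, which is exactly the ingredient of the complex-Heisenberg proof. A symmetric ansatz, built by summing signed $n$-forms of type $h_1^*\wedge\cdots\wedge\widehat{h_m^*}\wedge\cdots\wedge h_n^*\wedge j_m^*$ wedged with $I^*$ and $h_1^*\wedge\cdots\wedge\widehat{h_m^*}\wedge\cdots\wedge h_n^*\wedge i_m^*$ wedged with $J^*$, is the natural candidate: the diagonal contributions of $\theta_I\wedge\omega_I$ and $\theta_J\wedge\omega_J$ should supply exactly the missing $h_1^*\wedge\cdots\wedge h_n^*\wedge i_m^*\wedge j_m^*$ terms, while the remaining cross terms should telescope by the cyclic $(I,J,K)$-symmetry of the quaternionic bracket. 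That such a cancellation is possible at all is delicate---its failure in the octonionic setting, as noted above, accounts for why this step was inaccessible to the techniques of \cite{Gruber1}.
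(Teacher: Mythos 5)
Your overall strategy is exactly the paper's: apply Proposition \ref{Burillo} with $r=n$, $s=n+2$, reuse Burillo's chain $b'$ via the embedding $H^n_\CC\hookrightarrow H^n_\HH$, and extend the form $\gamma=(-1)^n K^*\wedge h_1^*\wedge\cdots\wedge h_n^*$ to a closed left-invariant $(n+1)$-form $\eta$ on $H^n_\HH$ whose extra terms vanish on the embedded $H^n_\CC$. Your reduction of closedness to the algebraic identity $\theta_I\wedge\omega_I+\theta_J\wedge\omega_J+\theta_K\wedge\omega_K=0$ and your identification of the obstruction $\sum_m h_1^*\wedge\cdots\wedge h_n^*\wedge i_m^*\wedge j_m^*$ are both correct. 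But the proof has a genuine gap precisely at the step you flag as delicate: your first-order ansatz does not close up, and the claimed telescoping of the cross terms is false. Concretely, take $\theta_J=\sum_m \epsilon_m\, h_1^*\wedge\cdots\wedge\widehat{h_m^*}\wedge\cdots\wedge h_n^*\wedge i_m^*$ and wedge with $\omega_J=\sum_r(i_r^*\wedge k_r^*+j_r^*\wedge h_r^*)$. The diagonal terms $r=m$ do produce the needed $h_1^*\wedge\cdots\wedge h_n^*\wedge i_m^*\wedge j_m^*$ corrections, but for $r\neq m$ the summand $i_r^*\wedge k_r^*$ survives against $\widehat{h_m^*}\wedge i_m^*$ and yields nonzero terms of the shape $h_1^*\wedge\cdots\wedge\widehat{h_m^*}\wedge\cdots\wedge h_n^*\wedge i_m^*\wedge i_r^*\wedge k_r^*$ (two $i$-factors and one $k$-factor). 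The analogous cross terms from $\theta_I\wedge\omega_I$ carry two $j$-factors and one $k$-factor. These two families are linearly independent basis $(n+2)$-forms on $V_1$, so no choice of signs and no appeal to cyclic $(I,J,K)$-symmetry makes them cancel against each other or among themselves. A one-substitution correction therefore cannot produce a closed form; the heart of the proposition is left unproved.

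The paper resolves this differently, and the difference is instructive: the correct $\eta$ uses neither $I^*$ nor any $j_m^*$-factor. One sets $v_{0,m}=h_m^*$, $v_{1,m}=i_m^*$, and sums over \emph{all} $a\in\{0,1\}^n$ the forms $v(a)=v_{a_1,1}\wedge\cdots\wedge v_{a_n,n}$, attaching $K^*$ when $\sum_m a_m$ is even and $-J^*$ when it is odd, with a sign depending on $\sum_m a_m$ modulo $4$. In other words, the second-order terms your cross terms demand are absorbed by allowing arbitrarily many substitutions $h_m^*\rightsquigarrow i_m^*$, and closedness is verified by pairing each $a\in A^{even}$ with its neighbours $\tau_r(a)\in A^{odd}$ and checking that the corresponding summands of $\mathrm d\eta$ cancel; the period-four sign pattern is exactly what makes every such pair cancel. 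Only $v(0,\dots,0)$ survives restriction to $H^n_\CC$, which gives $\int_b\eta=\int_b\gamma>0$ as in your positivity argument. If you want to complete your proof, you should replace the one-substitution ansatz by this full sum (or prove by some other means that a closed extension with the required scaling and restriction exists); as written, the candidate form is not closed.
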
 
\begin{proof}
We are going to use  Proposition \ref{Burillo}. To this end, we have to construct a Lipschitz $(n+1)$-chain $b$ in $H^n_\HH$ and a closed $H^n_\HH$-invariant $(n+1)$-form $\eta$ on $H^n_\HH$ with the correct scaling behaviour.
We start with the constructions Burillo did in the proof of \cite[Theorem 2.1]{Burillo} to obtain the lower bound for the filling volume function $F^{n+1}_{H^n_\CC}$ of the complex Heisenberg group. We denote the there constructed $(n+1)$-chain by $b'$ and the corresponding $(n+1)$-form by $\gamma$.

Now let $B_\CC$ be the usual basis of the Lie algebra $\mf h^n_\CC$ of the complex Heisenberg group $H^n_\CC$ and $B_\HH$ the basis of the Lie algebra $\mf h^n_\HH$ of the quaternionic Heisenberg group $H^n_\HH$ (compare Section \ref{HG}). 
Then the complex Heisenberg group $H^n_\CC$ embeds as a Lie subgroup into the quaternionic Heisenberg group $H^n_\HH$. We do this on the Lie algebra level via the map
$$\Phi:\mf h^n_\CC \to \mf h^n_\HH \quad \text{ defined by } \ h_i \mapsto h_i \ , \ k_i \mapsto k_i \ ,\ K \mapsto K\ .$$
Hence we can consider the  $(n+1)$-chain $b'$ constructed in \cite{Burillo} as an $(n+1)$-chain in the quaternionic Heisenberg group $H^n_\HH$. We set $b\defeq b'$. The above embedding respects the grading of the Lie algebras, i.e. vectors of the first layer are mapped to vectors of the first layer and vectors of the second layer are mapped to vectors of the second layer. Therefore the boundary $\partial b$ of the chain $b$ has the same scaling behaviour in the quaternionic Heisenberg group $H^n_\HH$ as it had in the complex Heisenberg group $H^n_\CC$, i.e. $\mass(s_t(\partial b)) \le \mass(\partial b) \cdot t^n$.

Consequently, it remains to construct a closed, $H^n_\HH$-invariant $(n+1)$-form $\eta$ on $H^n_\HH$, such that $\eta$ restricts on the embedded $H^n_\CC$  to the $(n+1)$-form $\gamma$ (this implies condition \emph{2)} in Proposition \ref{Burillo}) and such that $\eta$ satisfies 
$$s_t^*\eta = t^{n+2}\eta\ .$$
The form $\gamma$ is given (with respect to the above notation for $H^n_\CC \subset H^n_\HH$) by
$$\gamma=(-1)^{n}\cdot K^*\wedge h^*_1\wedge...\wedge h^*_n$$
where for $v\in \mf h^n_{\HH}$ the symbol $v^*$ denotes the dual form of $v$.

We start the construction of $\eta$ by defining some special $n$-forms. For this let $v_{0,m}\defeq h_m^*$ and $v_{1,m}\defeq i_m^*$ for $m \in \{1,...,n\}$. Further let 
$$A^{even}\defeq \{a \in \{0,1\}^n \mid \sum_{m=1}^n a_m \text{ is even}\}$$
$$A^{odd}\defeq \{a \in \{0,1\}^n \mid \sum_{m=1}^n a_m \text{ is odd}\}$$
Then we define  the $n$-forms 
$$v(a)\defeq v_{a_1,1}\wedge v_{a_2,2} \wedge ... \wedge v_{a_n,n} \quad \text{for } a \in \{0,1\}^n$$
and assign them signs by 
$$\text{sign}(a)\defeq\begin{cases} \phantom{-} 1 \quad \text{if } \sum_{m=1}^n a_m \equiv 0 \text{ or } 3 \text{ (mod 4)}\\ -1 \quad \text{if } \sum_{m=1}^n a_m \equiv 1 \text{ or } 2 \text{ (mod 4)}\end{cases}$$
We set
$$\eta\defeq \sum_{a \in A^{even}} \text{sign}(a)v(a) \wedge K^* - \sum_{a \in A^{odd}} \text{sign}(a)v(a) \wedge J^*$$
The only form $v(a)$, which is not zero when restricted to $H^n_\CC$, is 
$$v(0,...,0)=h_1^*\wedge h_2^*\wedge ...\wedge h_n^* $$ 
As the sign of this form is ``$+$'', the form $\eta$ coincides with $\gamma$ on $H^n_\CC$.\\
Further holds
$$s_t^*\eta = t^{n+2} \eta$$
as each summand in $\eta$ consists of $n$ dual forms of vectors of the first layer of the grading of the Lie algebra $\mf h^n_\HH$ which scale linearly and one dual form of a vector of the second layer of the grading which scales quadratically.\
It remains to show that $\eta$ is closed. For this purpose we will use the bijections 
$$\tau_r: A^{odd} \to A^{even}, (a_1,...,a_r,...,a_n) \mapsto (a_1,...,1-a_r,...,a_n).$$
Then:
\begin{align*}
(-1)^n\cdot\text{d} \eta=& \sum_{a \in A^{even}} \text{sign}(a)v(a) \wedge \text d K^* - \sum_{a \in A^{odd}} \text{sign}(a)v(a) \wedge\text d J^*\\
=&\sum_{a \in A^{even}\atop r=1,...,n} \text{sign}(a)v(a) \wedge (j_r^*\wedge i_r^* + k_r^* \wedge h_r^*)\\
&\hspace*{2cm}-\sum_{a \in A^{odd}\atop r=1,...,n} \text{sign}(a)v(a) \wedge (i_r^*\wedge k_r^* + j_r^* \wedge h_r^*)\\
=& \sum_{a \in A^{even}\atop r=1,...,n} \Big( \text{sign}(a)v(a) \wedge (j_r^*\wedge i_r^* + k_r^* \wedge h_r^*)\\
&\hspace*{2cm}- \text{sign}(\tau_r(a))v(\tau_r(a)) \wedge (i_r^*\wedge k_r^* + j_r^* \wedge h_r^*)\Big)
\end{align*}
Each of term of the sum is zero as the following computations show:

We treat the case $a_r=1$, the case $a_r=0$ works analogue. As $a \in A^{even}$ we have $\sum_{m} a_m \equiv 0 \text{ or } 2 \text{ (mod 4)}$ and therefore $\sum_{m} \tau_r(a)_m \equiv 3 \text{ or } 1 \text{ (mod 4)}$ as the value at position $r$ changes from $1$ to $0$. By the definition of the sign we have $\text{sign}(a)=\text{sign}(\tau_r(a))$. As $a_r=1$ the form $v(a)$ contains $i_r^*$ (and not $h_r^*$) and the form $v(\tau_r(a))$ contains $h_r^*$ (and not $i_r^*$). Therefore such a summand looks like
$$\text{sign} (a)\cdot \big(v(a) \wedge k_r^* \wedge h_r^*- v(\tau_r(a)) \wedge i_r^*\wedge k_r^* \big) $$
The forms $v(a)$ and $v(\tau_r(a))$ only differ at position $r$. To see that $v(a) \wedge k_r^* \wedge h_r^*$ and $v(\tau_r(a)) \wedge i_r^*\wedge k_r^*$ coincide, we have to bring all index $r$ terms at coinciding positions and compare the generated signs. We do this by bringing $i_r^*$ to the last position, $h_r*$ to the second-to-last position and $k_r^*$ the third-to-last position. In  the case of $v(a) \wedge k_r^* \wedge h_r^*$ this changes the sign by $(-1)^{n-r+2} \cdot 1 \cdot 1=(-1)^{n-r+2}$. In  the case of $v(\tau_r(a)) \wedge i_r^*\wedge k_r^*$ this changes the sign by $(-1)\cdot(-1)^{n-r+1}\cdot 1=(-1)^{n-r+2}$. So the two signs are equal and therefore the two forms coincide.

This implies
$$\text d \eta =0$$
So $\eta$ is closed and the conditions of Proposition \ref{Burillo} are fulfilled. Consequently we get the lower bound $F^{n+1}_{H^n_\HH}(l) \succcurlyeq l^\frac{n+2}{n}$.
\end{proof}

Together with  the upper bound $F^{n+1}_{H^n_\HH}(l) \preccurlyeq l^\frac{n+2}{n}$ from \cite{Gruber1} this proves Theorem \ref{Thm1}.


\setlength{\bibsep}{0em}

\bibliography{bib}
\bibliographystyle{plain}

\vfill
\quad\\
\small{
Moritz Gruber\\
Fakult\"at f\"ur Mathematik\\
Karlsruhe Institute of Technology\\
Englerstraße 2\\
76131 Karlsruhe\\
Germany\\
\emph{E-mail address:} \texttt{moritz.gruber@kit.edu}

}

\end{document}